\documentclass[12pt, reqno]{amsart}
\usepackage{amsmath, amsthm, amscd, amsfonts, amssymb, graphicx, color}
\usepackage[bookmarksnumbered, colorlinks, plainpages]{hyperref}
\input{mathrsfs.sty}

\textheight 23truecm \textwidth 16truecm
\setlength{\oddsidemargin}{0.01in}\setlength{\evensidemargin}{0.01in}
\setlength{\topmargin}{-.5cm}

\newtheorem{theorem}{Theorem}[section]
\newtheorem{lemma}[theorem]{Lemma}

\newtheorem{corollary}[theorem]{Corollary}
\theoremstyle{definition}

\theoremstyle{remark}

\numberwithin{equation}{section}

\begin{document}

\title[Characterization of a generalized triangle inequality]{Characterization of a generalized triangle inequality in normed spaces}
\dedicatory{{\rm {\bf Farzad Dadipour} \\Department of Pure Mathematics, Ferdowsi University of
Mashhad, P. O. Box 1159, Mashhad 91775, Iran\\dadipoor@yahoo.com\\
\vspace{0.25cm}
{\bf Mohammad Sal Moslehian}\\Department of Pure Mathematics, Center of Excellence in
Analysis on Algebraic Structures (CEAAS)\\ Ferdowsi University of
Mashhad, P. O. Box 1159, Mashhad 91775, Iran\\moslehian@ferdowsi.um.ac.ir\\
\vspace{0.25cm}
{\bf John M. Rassias}\\Pedagogical Department, National and Capodistrian University of
Athens, Section of Mathematics and Informatics, 4, Agamemnonos str.,
Aghia Paraskevi, Attikis 15342, Athens, Greece\\jrassias@primedu.uoa.gr\\
\vspace{0.25cm}
{\bf Sin-Ei Takahasi}\\ Yamagata University (Professor Emeritus), Natsumidai 3-8-16-502,Funabashi, Chiba, 273-0866, Japan \\sin$_-$ei1@yahoo.co.jp}}

\subjclass[2010]{Primary: 46C15; Secondary: 46B20, 46C05.}

\keywords{generalized triangle inequality; norm inequality; triangle inequality of the second type; normed space.}

\begin{abstract}
For a normed linear space $(X,\|\cdot\|)$ and $p>0$ we characterize all
$n$-tuples $(\mu_1,\cdots,\mu_n)\in\mathbb{R}^{n}$ for which the
generalized triangle inequality of the second type
$$\|x_1+\cdots+x_n\|^p\leq\frac{\|x_1\|^p}{\mu_1}+\cdots+\frac{\|x_n\|^p}{\mu_n}$$ holds for any $x_1,\cdots,x_n\in
X$. We also characterize $(\mu_1,\cdots,\mu_n)\in\mathbb{R}^{n}$
for which the reverse of the inequality above holds.
\end{abstract} \maketitle


\section{Introduction}

The triangle inequality is one of the most significant inequalities
in mathematics. It has many interesting generalizations, refinements
and reverses, which have been obtained over the years, see \cite{AM,
Drag, KMM, MAL, MSKT, Pec-Raj} and references therein. The generalized triangle inequalities are useful to study the geometrical structure of normed spaces, see e.g. \cite{Kato, Hsu, HL}. In this direction
some results have been based on the triangle inequality of the
second type
\begin{eqnarray*}
\|x+y\|^2\leq2(\|x\|^2+\|y\|^2)
\end{eqnarray*}
in a normed linear space, see \cite{Belb, Saitoh, Takahasi-Ras-Sai-Takahashi} for more information about this inequality.\\
In framework of Hilbert spaces the Euler-Lagrange type identity (see \cite{Ras})
$$\frac{\|x\|^2}{\mu}+\frac{\|y\|^2}{\nu}-\frac{\|ax+by\|^2}{\lambda}=\frac{\|\nu
bx-\mu ay\|^2}{\lambda\mu\nu}\ \ \ (\lambda=\mu a^2+\nu b^2)$$
follows the more general triangle inequality of the second type (see
\cite{Takahasi-Ras-Sai-Takahashi})
\begin{eqnarray*}
\frac{\|ax+by\|^2}{\lambda}\leq\frac{\|x\|^2}{\mu}+\frac{\|y\|^2}{\nu}\,,
\end{eqnarray*}
where $\lambda=\mu a^2+\nu b^2$ and $\lambda\mu\nu>0$. Also Takahasi
et al. \cite{Takahasi-Ras-Sai-Takahashi} investigate the inequality
\begin{eqnarray}\label{mos}
\frac{\|ax+by\|^p}{\lambda}\leq\frac{\|x\|^p}{\mu}+\frac{\|y\|^p}{\nu}
\end{eqnarray}
for $p\geq1$. We should notice that one can assume that $\lambda=\pm 1$ by dividing the both sides of \eqref{mos} by $|\lambda|$ as well as  $a\neq 0, b \neq 0$ since if, e.g. $a=0$, then $\mu$ can be suitably chosen arbitrary.  By replacing $x$ and $y$ by $\frac{x}{a}$ and $\frac{y}{b}$ and changing  $\mu$ and $\nu$ accordingly, inequality \eqref{mos} turns into $\frac{\|x+y\|^p}{\pm 1}\leq\frac{\|x\|^p}{\mu}+\frac{\|y\|^p}{\nu}$.

In this paper, for a normed linear space $(X,\|\cdot\|)$ and $p>0$ we
characterize all $n$-tuples $(\mu_1,\cdots,\mu_n)\in\mathbb{R}^{n}$
for which the generalized triangle inequality of the second type
$$\|x_1+\cdots+x_n\|^p\leq\frac{\|x_1\|^p}{\mu_1}+\cdots+\frac{\|x_n\|^p}{\mu_n}$$ holds for any $x_1,\cdots,x_n\in
X$. We also characterize $(\mu_1,\cdots,\mu_n)\in\mathbb{R}^{n}$
for which the reverse of the inequality above holds.

\section{Main results}

We need the following three lemmas which generalize some results due
to Takagi et al. \cite[Theorem 2.1, Lemma 2.2 and Theorem
2.3]{Takagi-Miu-Hay-Takahasi}. Let us first recall the concept of an
envelope.

An envelope of a family of surfaces is a surface that is
tangent to each member of the family at some point. Let
$\mathscr{L}$ be an $(n-1)$-parameters family of surfaces in
$\mathbb{R}^n$ given by $F(a_1,\cdots,a_n;s_1,\cdots,s_{n-1})=0$
depending on real parameters $s_1,\cdots,s_{n-1}$ and variables
$a_1,\cdots,a_n$. The envelope of $\mathscr{L}$ is the set of points
$(a_1,\cdots,a_n)\in\mathbb{R}^n$ for which the following equations
hold:
\begin{eqnarray*}
&F(a_1,\cdots,a_n;s_1,\cdots,s_{n-1})=0\,,&\\
&\frac{\partial F}{\partial
s_i}(a_1,\cdots,a_n;s_1,\cdots,s_{n-1})=0&\ \ \ (1\leq i\leq n-1)\,.
\end{eqnarray*}


\begin{lemma}\label{lemma1}
Suppose that $p>1$, $S=\{(s_1,\cdots,s_n)\colon s_1,\cdots,s_n\geq0,
\sum_{i=1}^{n}s_i=1\}$ and
$h_p(a_1,\cdots,a_{n-1})=\Big(1-\sum_{i=1}^{n-1}a_i^{\frac{1}{1-p}}\Big)^{1-p}$ for $a_1,\cdots,a_{n-1}>0$ with $\sum_{i=1}^{n-1}a_i^{\frac{1}{1-p}}<1$.
For each $(s_1,\cdots,s_n)\in S$, let\\
\centerline{$L_p(s_1,\cdots,s_n):=\{(a_1,\cdots,a_n)\colon
a_1s_1^p+\cdots+a_ns_n^p=1\}\ ,$}
\centerline{$\Delta_p(s_1,\cdots,s_n):=\{(a_1,\cdots,a_n)\colon
a_1s_1^p+\cdots+a_ns_n^p\geq1\}\,.$}
Then the following assertions hold:\\
{\rm (i)} If $\mathscr{L}=\{L_p(s_1,\cdots,s_n)\colon(s_1,\cdots,s_n)\in S\}$, then the envelope of $\mathscr{L}$ is given by $a_n=h_p(a_1,\cdots,a_{n-1})$;\\
{\rm (ii)} $\displaystyle{\bigcap_{(s_1,\cdots,s_n)\in
S}}\Delta_p(s_1,\cdots,s_n)=\{(a_1,\cdots,a_n)\colon a_n\geq
h_p(a_1,\cdots,a_{n-1})\}\,.$
\end{lemma}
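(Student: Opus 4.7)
My plan is to prove (i) by a direct application of the envelope definition and (ii) by minimizing the linear combination $\sum_{i=1}^{n}a_is_i^p$ over the simplex $S$ via Lagrange multipliers.

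For (i), I parametrize $S$ by $(s_1,\cdots,s_{n-1})$ with $s_n=1-s_1-\cdots-s_{n-1}$ and set
\[
F(a_1,\cdots,a_n;s_1,\cdots,s_{n-1})=\sum_{i=1}^{n-1}a_is_i^p+a_ns_n^p-1.
\]
The envelope equations $\partial F/\partial s_j=0$ read $a_js_j^{p-1}=a_ns_n^{p-1}$ for $1\leq j\leq n-1$. Denoting the common value by $c$, we have $a_is_i^{p-1}=c$ for every $i$, hence $a_is_i^p=cs_i$; summing over $i$ and using both $F=0$ and $\sum_{i=1}^{n}s_i=1$ gives $c=1$, so $s_i=a_i^{1/(1-p)}$. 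The constraint $\sum_{i=1}^{n}s_i=1$ then becomes $\sum_{i=1}^{n}a_i^{1/(1-p)}=1$, which, after isolating the $i=n$ term and raising to the power $1-p$, is exactly $a_n=h_p(a_1,\cdots,a_{n-1})$.

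For (ii), the key reformulation is that $(a_1,\cdots,a_n)\in\bigcap_{s\in S}\Delta_p(s)$ if and only if $\min_{s\in S}\sum_{i=1}^{n}a_is_i^p\geq 1$. Testing against the vertices $s_i=1$ of $S$ shows that any point in the intersection satisfies $a_i\geq 1>0$, so I may assume $a_1,\cdots,a_n>0$ throughout. Since $p>1$, the objective is strictly convex on $S$, so it attains its minimum at a unique interior critical point. Lagrange multipliers against $\sum s_i=1$ give $pa_is_i^{p-1}=\lambda$, so $s_i=\bigl(\lambda/(pa_i)\bigr)^{1/(p-1)}$, and the constraint $\sum s_i=1$ pins down $\lambda$. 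A routine bookkeeping of exponents then yields
\[
\min_{s\in S}\sum_{i=1}^{n}a_is_i^p=\Bigl(\sum_{i=1}^{n}a_i^{1/(1-p)}\Bigr)^{\!1-p}.
\]
Since $1-p<0$, the inequality $\bigl(\sum_{i=1}^{n}a_i^{1/(1-p)}\bigr)^{1-p}\geq 1$ is equivalent to $\sum_{i=1}^{n}a_i^{1/(1-p)}\leq 1$, i.e.\ $a_n^{1/(1-p)}\leq 1-\sum_{i=1}^{n-1}a_i^{1/(1-p)}$; raising both positive sides to the negative power $1-p$ reverses the inequality and delivers $a_n\geq h_p(a_1,\cdots,a_{n-1})$.

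The main obstacle is purely technical rather than conceptual: I must track the sign of $1-p$ each time I exponentiate, verify that the Lagrange critical point indeed lies in the relative interior of $S$ (this is where $p>1$ and $a_i>0$ are essential, as $pa_is_i^{p-1}$ cannot vanish for positive $a_i$), and reduce the exponent arithmetic $1-p/(p-1)=-1/(p-1)$ correctly when simplifying the minimum formula. Once these bookkeeping steps are carried out, both parts follow mechanically from the computations outlined above.
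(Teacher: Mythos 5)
Your part (i) is essentially the paper's own computation: the same parametrization of $S$ by $(s_1,\dots,s_{n-1})$, the same envelope system $F=0$, $\partial F/\partial s_j=0$, and the same elimination leading to $s_i=a_i^{1/(1-p)}$ and hence $a_n=h_p(a_1,\dots,a_{n-1})$ (your normalization $c=1$ is just a tidier way of organizing the paper's equations \eqref{L3}--\eqref{L5}). Part (ii) is where you genuinely diverge, and for the better: the paper disposes of (ii) in two lines by asserting that $h_p$ is strictly convex, that $\{a_n\ge h_p\}$ is therefore a convex body, and that "the result follows from part (i)" --- i.e.\ it implicitly invokes the fact that a convex region bounded by the envelope of a family of tangent hyperplanes equals the intersection of the corresponding half-spaces, without spelling this out. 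You instead prove (ii) directly by computing $\min_{s\in S}\sum_i a_is_i^p=\bigl(\sum_i a_i^{1/(1-p)}\bigr)^{1-p}$ and translating $\min\ge 1$ into $a_n\ge h_p$; this is self-contained, makes (ii) independent of the envelope formalism, and correctly handles the positivity of the $a_i$ by testing the vertices first. The one step you should repair is the claim that strict convexity of $s\mapsto\sum_i a_is_i^p$ forces the minimizer into the relative interior of $S$: strict convexity only gives uniqueness, and your parenthetical reason is off, since for $p>1$ the derivative $pa_is_i^{p-1}$ \emph{does} vanish at $s_i=0$. The interiority is nevertheless true --- if $s_j=0$ at the minimizer, transferring mass $\varepsilon$ from a positive coordinate $s_k$ to $s_j$ changes the objective by $a_j\varepsilon^p-a_kps_k^{p-1}\varepsilon+o(\varepsilon)<0$ for small $\varepsilon$ because $\varepsilon^p=o(\varepsilon)$, a contradiction (alternatively, compare the interior critical value with the face minima, which are $\bigl(\sum_{i\in I}a_i^{1/(1-p)}\bigr)^{1-p}$ for proper subsets $I$ and hence larger). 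With that one-line fix your argument is complete and, for part (ii), more rigorous than the paper's.
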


\begin{proof}
(i) Putting\\
\centerline{$F(a_1,\cdots,a_n;s_1,\cdots,s_{n-1})=a_1s_1^p+\cdots+a_{n-1}s_{n-1}^p+a_n(1-(s_1+\cdots+s_{n-1}))^p-1$,}
we can consider $\mathscr{L}$ as a family of $(n-1)$-parameters of surfaces as follows:\\
\begin{eqnarray*}
\mathscr{L}&=&\{L_p(s_1,\cdots,s_n)\colon(s_1,\cdots,s_n)\in S\}\\
&=&\Big\{\{(a_1,\cdots,a_n)\colon a_1s_1^p+\cdots+a_ns_n^p=1\}\colon s_1,\cdots,s_n\geq0, \sum_{i=1}^{n}s_i=1\Big\}\\
&=&\Big\{\{(a_1,\cdots,a_n)\colon a_1s_1^p+\cdots+a_{n-1}s_{n-1}^p+a_n(1-(s_1+\cdots+s_{n-1}))^p=1\}\colon\\
&&s_1,\cdots,s_{n-1}\geq 0, \sum_{i=1}^{n-1}s_i\leq1\Big\}\\
&=&\Big\{\{(a_1,\cdots,a_n)\colon
F(a_1,\cdots,a_n;s_1,\cdots,s_{n-1})=0\}\colon
s_1,\cdots,s_{n-1}\geq0, \sum_{i=1}^{n-1}s_i\leq1\Big\}\,.
\end{eqnarray*}
The envelope of $\mathscr{L}$ is given by the solutions of the
following simultaneous equations:
\begin{eqnarray}\label{L1}
&F(a_1,\cdots,a_n;s_1,\cdots,s_{n-1})=0\,,&\nonumber\\
&\frac{\partial F}{\partial
s_i}(a_1,\cdots,a_n;s_1,\cdots,s_{n-1})=0&\ \ \ (1\leq i\leq n-1)\,.
\label{L2}
\end{eqnarray}
Equations \eqref{L2} yield
\begin{eqnarray}\label{L2.5}
pa_is_i^{p-1}-pa_n(1-(s_1+\cdots+s_{n-1}))^{p-1}=0\ \ \ (1\leq i\leq n-1)
\end{eqnarray}
It must be $s_i\neq 0$ for all $i=1,\cdots,n-1$. Indeed, if $s_j=0$ for some $1 \leq j\leq n-1$, then
$a_1s_1=\cdots=a_{n-1}s_{n-1}=a_n\big(1-\sum_{i=1}^{n-1}s_i\big)^{p-1}=0$ by \eqref{L2.5}. So that $F(a_1,\cdots,a_n;s_1,\cdots,s_{n-1})=-1$, which contradicts  \eqref{L1}. Moreover, by the same way, it must be $\sum_{i=1}^{n-1}s_i< 1$.\\
From \eqref{L2.5} we get
\begin{eqnarray}\label{L3}
a_i=\frac{a_n(1-(s_1+\cdots+s_{n-1}))^{p-1}}{s_i^{p-1}}\qquad (1\leq
i\leq n-1)\,.
\end{eqnarray}
Using \eqref{L3}, equation \eqref{L1} turns into\\
\begin{eqnarray*}
a_ns_1(1-(s_1+\cdots+s_{n-1}))^{p-1}+\cdots+a_ns_{n-1}(1-(s_1+\cdots+s_{n-1}))^{p-1}\\
+a_n(1-(s_1+\cdots+s_{n-1}))^{p}-1=0\,,
\end{eqnarray*}
or equivalently we get
\begin{eqnarray}\label{L4}
a_n=\frac{1}{(1-(s_1+\cdots+s_{n-1}))^{p-1}}\,.
\end{eqnarray}
From \eqref{L3} and \eqref{L4} we get
\begin{eqnarray}\label{L5}
a_i=\frac{1}{s_i^{p-1}}\ \ \ (1\leq i\leq n-1)\,.
\end{eqnarray}
We note that $s_1,\cdots,s_{n-1}>0$ and $\sum_{i=1}^{n-1}s_i<1$ if
and only if $a_1,\cdots,a_{n-1}>0$ and
$\sum_{i=1}^{n-1}a_i^{\frac{1}{1-p}}<1$. Now we remove the
parameters $s_1,\cdots,s_{n-1}$ in equations \eqref{L4} and
\eqref{L5} to get
\begin{eqnarray*}
a_n&=&\frac{1}{\left(1-\left(\left(\frac{1}{a_1}\right)^{\frac{1}{p-1}}+\cdots+\left(\frac{1}{a_{n-1}}\right)^{\frac{1}{p-1}}\right)\right)^{p-1}}
=\left(1-\left(a_1^{\frac{1}{1-p}}+\cdots+a_{n-1}^{\frac{1}{1-p}}\right)\right)^{1-p}\\
&=&\left(1-\sum_{i=1}^{n-1}a_i^{\frac{1}{1-p}}\right)^{1-p}=h_p(a_1,\cdots,a_{n-1})\,.
\end{eqnarray*}
(ii) It is not difficult to check that the function $h_p$ is
strictly convex and thus the domain $\{(a_1,\cdots,a_n)\colon a_n\geq
h_p(a_1,\cdots,a_{n-1})\}$ is a strictly convex set in the Euclidean
space $\mathbb{R}^n$. Now the result follows from part (i).
\end{proof}


\begin{lemma}\label{lemma2}
Let $p>1$ and $\Omega\subseteq \big\{(s_1,\cdots,s_n)\colon
s_1,\cdots,s_n\geq0,\ \sum_{i=1}^{n}s_i\geq1\big\}$. Let
$D_p(\Omega):=\{(a_1,\cdots,a_n)\colon a_1,\cdots,a_n\geq 0,
 a_1s_1^p+\cdots+a_ns_n^p\geq 1\ for\ all\ (s_1,\cdots,s_n)\in\Omega\}$.
Then the following assertions hold:\\
{\rm (i)} $\{(a_1,\cdots,a_n)\colon a_n\geq
h_p(a_1,\cdots,a_{n-1})\}\subseteq D_p(\Omega)\,;$\\
{\rm (ii)} If $S\subseteq\overline{\Omega}$ where
$\overline{\Omega}$ is the usual closure of $\Omega$, then
$D_p(\Omega)=\{(a_1,\cdots,a_n)\colon a_n\geq
h_p(a_1,\cdots,a_{n-1})\}\,.$
\end{lemma}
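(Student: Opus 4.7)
The plan is to reduce both parts directly to Lemma~\ref{lemma1}(ii), which identifies $\bigcap_{(s_1,\ldots,s_n)\in S}\Delta_p(s_1,\ldots,s_n)$ with $\{(a_1,\ldots,a_n)\colon a_n\geq h_p(a_1,\ldots,a_{n-1})\}$.

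For part~(i), suppose $a_n\geq h_p(a_1,\ldots,a_{n-1})$; this forces $a_1,\ldots,a_{n-1}>0$ (the domain of $h_p$) and $a_n>0$ (since $h_p$ is positive on its domain), so the nonnegativity required by $D_p(\Omega)$ is automatic. By Lemma~\ref{lemma1}(ii), $\sum_{i=1}^{n}a_it_i^p\geq 1$ for every $(t_1,\ldots,t_n)\in S$. Given any $(s_1,\ldots,s_n)\in\Omega$, set $T:=\sum_{i=1}^{n}s_i\geq 1$ and $t_i:=s_i/T$, so that $(t_1,\ldots,t_n)\in S$; since $T\geq 1$ and $p>1$, rescaling gives
\[
\sum_{i=1}^{n}a_is_i^p \;=\; T^{p}\sum_{i=1}^{n}a_it_i^p \;\geq\; T^{p} \;\geq\; 1,
\]
whence $(a_1,\ldots,a_n)\in D_p(\Omega)$.

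For part~(ii), one inclusion is already supplied by~(i). For the converse, take $(a_1,\ldots,a_n)\in D_p(\Omega)$; by definition $a_i\geq 0$ and $\sum_{i=1}^{n}a_is_i^p\geq 1$ for all $(s_1,\ldots,s_n)\in\Omega$. Because the map $(s_1,\ldots,s_n)\mapsto\sum_{i=1}^{n}a_is_i^p$ is continuous, this inequality propagates to $\overline{\Omega}$, which by hypothesis contains $S$. Thus $(a_1,\ldots,a_n)\in\Delta_p(s_1,\ldots,s_n)$ for every $(s_1,\ldots,s_n)\in S$, and Lemma~\ref{lemma1}(ii) delivers $a_n\geq h_p(a_1,\ldots,a_{n-1})$.

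The whole argument is essentially mechanical once Lemma~\ref{lemma1}(ii) is in hand; the only mildly subtle point is ensuring that the implicit positivity attached to the domain of $h_p$ matches the explicit nonnegativity baked into $D_p(\Omega)$. The one genuinely useful idea is the rescaling $t_i=s_i/T$, which converts the normalization $\sum t_i=1$ defining $S$ into the inequality $\sum s_i\geq 1$ defining $\Omega$; no real obstacle is expected beyond this.
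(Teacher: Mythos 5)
Your proof is correct and follows the same route as the paper: both parts are reduced to Lemma~\ref{lemma1}(ii), with part~(ii) handled by exactly the same density-and-continuity argument. The only difference is in part~(i), where the paper passes from a point of $\Omega$ to a point of $S$ by shrinking coordinates ($r_i\leq t_i$) and using $a_i\geq 0$, while you rescale by $T=\sum_i s_i\geq 1$ and use $T^p\geq 1$; this is a minor, equally valid variant.
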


\begin{proof}
(i) By Lemma \ref{lemma1} (ii) it is sufficient to show that\\
\centerline{$\displaystyle{\bigcap_{(s_1,\cdots,s_n)\in
S}}\Delta_p(s_1,\cdots,s_n)\subseteq D_p(\Omega)$.} Let
$(a_1,\cdots,a_n)\in \displaystyle{\bigcap_{(s_1,\cdots,s_n)\in
S}}\Delta_p(s_1,\cdots,s_n)$ and $(t_1,\cdots,t_n)\in\Omega$ be
arbitrary. We can easily find $(r_1,\cdots,r_n)\in S$ such that
$r_i\leq t_i \ (1\leq i\leq n)$ whence $a_1r_1^p+\cdots+a_nr_n^p\leq
a_1t_1^p+\cdots+a_nt_n^p$.\\ Also from
$(a_1,\cdots,a_n)\in \displaystyle{\bigcap_{(s_1,\cdots,s_n)\in
S}}\Delta_p(s_1,\cdots,s_n)$ we get $a_1r_1^p+\cdots+a_nr_n^p\geq1$.\\
Thus $a_1t_1^p+\cdots+a_nt_n^p\geq1$ whence $(a_1,\cdots,a_n)\in
D_p(\Omega)$.\\
(ii) By (i) and Lemma \ref{lemma1} (ii) it is sufficient to show
that
$$D_p(\Omega)\subseteq \displaystyle{\bigcap_{(s_1,\cdots,s_n)\in
S}}\Delta_p(s_1,\cdots,s_n)\,.$$ Let $(a_1,\cdots,a_n)\in
D_p(\Omega)$ and $(s_1,\cdots,s_n)\in S$. There exists
a sequence $\{(t_{1,m},\cdots,t_{n,m})\}_{m=1}^{\infty}$ in $\Omega$
satisfying $(t_{1,m},\cdots,t_{n,m})\longrightarrow(s_1,\cdots,s_n)$
as $m\longrightarrow\infty$, or equivalently $t_{i,m}\longrightarrow
s_i$ as $m\longrightarrow\infty\,\,(1\leq i\leq n)$, since
$S\subseteq\overline{\Omega}$.\\
Now $(a_1,\cdots,a_n)\in D_p(\Omega)$ implies that
$a_1t_{1,m}^p+\cdots+a_nt_{n,m}^p\geq1\ (m\in\mathbb{N})$. Getting
limit as $m\longrightarrow\infty$, it follows that
$a_1s_1^p+\cdots+a_ns_n^p\geq1$.\\
Hence $(a_1,\cdots,a_n)\in\displaystyle{\bigcap_{(s_1,\cdots,s_n)\in
S}}\Delta_p(s_1,\cdots,s_n)$.
\end{proof}

Next, we identify $D_p(\Omega)$ when $0<p\leq1$.


\begin{lemma}\label{lemma3}
Let $0<p\leq1,\ \Omega\subseteq\{(s_1,\cdots,s_n)\colon
s_1,\cdots,s_n\geq0,\ \sum_{i=1}^{n}s_i\geq1\}$ and $D_p(\Omega)$ be
as in Lemma \ref{lemma2}. Then the following assertions hold:\\
{\rm (i)} $\{(a_1,\cdots,a_n)\colon a_1\geq1,\cdots,a_n\geq1\}\subseteq D_p(\Omega)$;\\
{\rm (ii)} If $\{e_1,\cdots,e_n\}\subseteq\overline{\Omega}$ where
$\{e_1,\cdots,e_n\}$ is the standard basis of $\mathbb{R}^n$, then\\
$D_p(\Omega)=\{(a_1,\cdots,a_n)\colon a_1\geq1,\cdots,a_n\geq1\}.$
\end{lemma}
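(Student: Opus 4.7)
The plan is to exploit the subadditivity of $t\mapsto t^p$ on $[0,\infty)$ for $0<p\leq 1$, which is the analytic fact replacing the envelope/convexity analysis used in Lemma \ref{lemma2}. The shape of the admissible set $D_p(\Omega)$ changes from a region bounded by a curved envelope to a simple orthant $\{a_i\geq 1\}$ precisely because, for $p\leq 1$, the function $t^p$ is concave and subadditive rather than convex and superadditive.

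For part (i), suppose $a_1,\ldots,a_n\geq 1$ and fix any $(s_1,\ldots,s_n)\in\Omega$. I would chain three elementary estimates:
$$a_1 s_1^p+\cdots+a_n s_n^p \;\geq\; s_1^p+\cdots+s_n^p \;\geq\; \Big(\sum_{i=1}^n s_i\Big)^{p} \;\geq\; 1,$$
where the first step uses $a_i\geq 1$ and $s_i^p\geq 0$, the second is the standard subadditivity inequality $\sum t_i^p \geq (\sum t_i)^p$ valid for $t_i\geq 0$ and $0<p\leq 1$ (itself a consequence of concavity of $t\mapsto t^p$, or by induction from $(a+b)^p\leq a^p+b^p$), and the third uses $\sum s_i\geq 1$ together with monotonicity of $t\mapsto t^p$ for $p>0$. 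This yields $(a_1,\ldots,a_n)\in D_p(\Omega)$.

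For part (ii), only the reverse inclusion needs work in view of (i). Let $(a_1,\ldots,a_n)\in D_p(\Omega)$ and fix $j\in\{1,\ldots,n\}$. Since $e_j\in\overline{\Omega}$, choose a sequence $(t_{1,m},\ldots,t_{n,m})\in\Omega$ with $t_{i,m}\to \delta_{ij}$ as $m\to\infty$. The defining inequality of $D_p(\Omega)$ gives
$$a_1 t_{1,m}^p+\cdots+a_n t_{n,m}^p\geq 1 \qquad (m\in\mathbb{N}),$$
and letting $m\to\infty$, using continuity of $t\mapsto t^p$ on $[0,\infty)$ (which holds for every $p>0$, including $t=0$), the left-hand side converges to $a_j$. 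Hence $a_j\geq 1$ for every $j$, completing the proof.

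There is no real obstacle here: the argument is a direct analogue of Lemma \ref{lemma2}(ii) but much simpler, since in the present regime the characterization is a finite system of half-space inequalities rather than a curved envelope. The only point that deserves care is to cite the correct direction of the $\ell^p$-subadditivity in (i) and to note that continuity of $t^p$ at $t=0$ holds for all $p>0$, which is what allows the limit argument in (ii) to go through uniformly across $0<p\leq 1$.
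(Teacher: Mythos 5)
Your proof is correct and follows essentially the same route as the paper: part (ii) is verbatim the paper's limit argument along a sequence in $\Omega$ converging to $e_k$, and part (i) rests on the same key reduction to $s_1^p+\cdots+s_n^p\geq 1$. The only (harmless) difference is that you justify that intermediate inequality via the subadditivity $\sum_i s_i^p\geq\bigl(\sum_i s_i\bigr)^p$, whereas the paper uses a two-case argument (all $s_i\leq 1$ versus some $s_j>1$); both are elementary and equally valid.
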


\begin{proof}
(i) Let $a_1\geq1,\cdots,a_n\geq1$ and $(s_1,\cdots,s_n)\in\Omega$
be arbitrary. First we show that $s_1^p+\cdots+s_n^p\geq1$.\\
Case 1. Let $s_i\leq1$ for all $i=1,\cdots,n$.\\
It follows that $s_i\leq s_i^p$ for all $i=1,\cdots,n$, whence
$s_1+\cdots+s_n\leq s_1^p+\cdots+s_n^p$. Also from
$(s_1,\cdots,s_n)\in\Omega$ we get $s_1+\cdots+s_n\geq1$. Thus
$s_1^p+\cdots+s_n^p\geq1$.\\
Case 2. Let $s_j>1$ for some $1 \leq j\leq n$.\\
We can easily obtain that $s_1^p+\cdots+s_n^p\geq s_j^p>1$.

Now we observe that $a_1s_1^p+\cdots+a_ns_n^p\geq
s_1^p+\cdots+s_n^p\geq1$. Hence
$(a_1,\cdots,a_n)\in D_p(\Omega)$.\\
(ii) By (i) it suffices to show that
$D_p(\Omega)\subseteq\{(a_1,\cdots,a_n)\colon
a_1\geq1,\cdots,a_n\geq1\}\,.$ Let $(a_1,\cdots,a_n)\in D_p(\Omega)$
and $k\in\{1,\cdots,n\}$. There exists a sequence
$\{(s_{1,m},\cdots,s_{n,m})\}_{m=1}^{\infty}$ in $\Omega$ satisfying
$(s_{1,m},\cdots,s_{n,m})\longrightarrow e_k$ as
$m\longrightarrow\infty$, or equivalently $s_{k,m}\longrightarrow1$
and $s_{l,m}\longrightarrow0$ as $m\longrightarrow\infty$
$(l\in\{1,\cdots,n\}\setminus\{k\})$, since
$\{e_1,\cdots,e_n\}\subseteq\overline{\Omega}$. Also from
$(a_1,\cdots,a_n)\in D_p(\Omega)$ we get
$a_1s_{1,m}^p+\cdots+a_ns_{n,m}^p\geq1\ \ (m\in\mathbb{N})$. Taking
limit as $m\longrightarrow\infty$, it follows that $a_k\geq1$.

\end{proof}


Let $(X,\|\cdot\|)$ be a normed space and $p>0$. Our main aim is to a characterize all $n$-tuples
$(\mu_1,\cdots,\mu_n)\in\mathbb{R}^{n}$ satisfying
\begin{eqnarray}\label{L6}
\|x_1+\cdots+x_n\|^p\leq\frac{\|x_1\|^p}{\mu_1}+\cdots+\frac{\|x_n\|^p}{\mu_n}\
\ (x_1,\cdots,x_n\in X)
\end{eqnarray}
or its reverse
\begin{eqnarray}\label{L6.5}
\|x_1+\cdots+x_n\|^p\geq\frac{\|x_1\|^p}{\mu_1}+\cdots+\frac{\|x_n\|^p}{\mu_n}\
\ (x_1,\cdots,x_n\in X)\,.
\end{eqnarray}
We put
$$F(p)=\left\{(\mu_1,\cdots,\mu_n)\in\mathbb{R}^{n}\colon\ \ \left\|\sum_{i=1}^nx_i\right\|^p
\leq\sum_{i=1}^{n}\frac{\|x_i\|^p}{\mu_i}\ {\rm for\ all}\
x_1,\cdots,x_n\in
X\right\}$$and
$$G(p)=\left\{(\mu_1,\cdots,\mu_n)\in\mathbb{R}^{n}\colon\
\ \left\|\sum_{i=1}^nx_i\right\|^p \geq\sum_{i=1}^{n}\frac{\|x_i\|^p}{\mu_i}\
{\rm for\ all}\ x_1,\cdots,x_n\in X\right\}\,.$$ Also for each
$k=0,1,\cdots,n$ we correspond $F(p\ ;k)$ ($G(p\ ;k)$, resp.) as the
subset of $F(p)$ ($G(p)$, resp.) consisting of all $n$-tuples
$(\mu_1,\cdots,\mu_n)\in\mathbb{R}^n$ for which inequality
\eqref{L6} (\eqref{L6.5}, resp.) holds and exactly $k$ numbers of
$\mu_1,\cdots,\mu_n$ are negative. We note that
\begin{eqnarray}\label{L14}
F(p)=\bigcup_{k=0}^nF(p\ ;k)
\end{eqnarray}
and
\begin{eqnarray}\label{L14.5}
G(p)=\bigcup_{k=0}^nG(p\ ;k)\,.
\end{eqnarray}

In the next two theorems we characterize $F(p)$. First we consider
the case where $p>1$.


\begin{theorem}\label{T1}
Let $(X,\|\cdot\|)$ be a normed space and $p>1$. Then the following assertions
hold:\\
{\rm(i)} $F(p\
;0)=\left\{(\mu_1,\cdots,\mu_n)\colon\mu_1,\cdots,\mu_n>0\
and\ \sum_{i=1}^n\mu_i^{\frac{1}{p-1}}\leq 1\right\}\,;$\\
{\rm(ii)} $F(p\ ;k)=\varnothing$, for all $k=1,\cdots,n$;\\
{\rm(iii)} $F(p)=F(p\ ;0)\,.$
\end{theorem}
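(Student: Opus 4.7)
The strategy is to translate \eqref{L6} into a pointwise lower bound on the simplex $S$ and invoke Lemma~\ref{lemma1}(ii). Throughout I fix a unit vector $e\in X$; the case $X=\{0\}$ is trivial and may be ignored.

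I would handle part~(ii) first. Suppose $\mu_j<0$ for some $j$. Substituting $x_j=e$ and $x_i=0$ for $i\neq j$ into \eqref{L6} forces $1\leq 1/\mu_j$, which is impossible since the right-hand side is negative. Hence $F(p;k)=\varnothing$ for every $1\leq k\leq n$, and part~(iii) then follows at once from \eqref{L14}.

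For part~(i), assume $\mu_1,\ldots,\mu_n>0$ and set $a_i:=1/\mu_i$. For necessity, pick any $(s_1,\ldots,s_n)\in S$ and substitute $x_i=s_ie$ into \eqref{L6}; since $\|x_1+\cdots+x_n\|=\sum s_i=1$, this yields $1\leq\sum_{i=1}^n a_is_i^p$, so $(a_1,\ldots,a_n)\in\bigcap_{(s_1,\ldots,s_n)\in S}\Delta_p(s_1,\ldots,s_n)$. By Lemma~\ref{lemma1}(ii), $a_n\geq h_p(a_1,\ldots,a_{n-1})$. Since $\frac{1}{1-p}<0$, the map $x\mapsto x^{1/(1-p)}$ is strictly decreasing on $(0,\infty)$, and a short rearrangement shows this is equivalent to $\sum_{i=1}^n a_i^{1/(1-p)}\leq 1$, i.e.\ $\sum_{i=1}^n\mu_i^{1/(p-1)}\leq 1$. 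For sufficiency, reversing the same chain of equivalences returns $(a_1,\ldots,a_n)$ to $\bigcap_S\Delta_p$; then for an arbitrary nonzero tuple $(x_1,\ldots,x_n)$ I set $T:=\sum_{i=1}^n\|x_i\|>0$ and $s_i:=\|x_i\|/T\in S$, which gives
\[
\sum_{i=1}^n a_i\|x_i\|^p \;=\; T^p\sum_{i=1}^n a_i s_i^p \;\geq\; T^p \;\geq\; \|x_1+\cdots+x_n\|^p,
\]
using the triangle inequality together with the monotonicity of $t\mapsto t^p$ on $[0,\infty)$.

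The only delicate point is the algebraic equivalence between $a_n\geq h_p(a_1,\ldots,a_{n-1})$ and $\sum_{i=1}^n a_i^{1/(1-p)}\leq 1$: one must check that the implicit domain condition $\sum_{i=1}^{n-1}a_i^{1/(1-p)}<1$ hidden in the definition of $h_p$ is automatically satisfied. This is the case because testing $\bigcap_S\Delta_p$ against the standard basis vectors $e_1,\ldots,e_n\in S$ forces $a_i\geq 1>0$ for every $i$, so all exponents make sense and the slack $1-\sum_{i=1}^{n-1}a_i^{1/(1-p)}\geq a_n^{1/(1-p)}>0$ is strictly positive. Once this is verified, the theorem is a direct application of Lemma~\ref{lemma1}(ii).
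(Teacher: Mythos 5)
Your proof is correct, but for part (i) it takes a genuinely different route from the paper. The paper proves Theorem \ref{T1} without touching Lemmas \ref{lemma1}--\ref{lemma3} at all: necessity comes from the single substitution $x_i=\mu_i^{1/(p-1)}x$, which yields $\big(\sum_i\mu_i^{1/(p-1)}\big)^p\le\sum_i\mu_i^{1/(p-1)}$ directly, and sufficiency is a short application of H\"older's inequality to $\big(\sum_i\mu_i^{1/p}\,\|x_i\|/\mu_i^{1/p}\big)^p$. You instead test \eqref{L6} on the simplex via $x_i=s_ie$, reduce it by homogeneity to the statement $(1/\mu_1,\dots,1/\mu_n)\in\bigcap_{S}\Delta_p$, and invoke the envelope Lemma \ref{lemma1}(ii); this is exactly the template the paper reserves for Theorems \ref{T3}, \ref{T2} and \ref{T4}, so your argument unifies Theorem \ref{T1} with the rest of the paper, at the cost of resting on Lemma \ref{lemma1}(ii), whose convexity-based proof is the least detailed part of the paper, whereas the paper's own proof of this particular theorem is elementary and self-contained. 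Your steps all check out: $\|\sum_ix_i\|=\sum_is_i=1$ on $S$, the map $t\mapsto t^{1/(1-p)}$ is decreasing so $a_n\ge h_p(a_1,\dots,a_{n-1})$ is equivalent to $\sum_{i=1}^n a_i^{1/(1-p)}\le1$, and the normalization $s_i=\|x_i\|/T$ together with the triangle inequality recovers \eqref{L6}. One small remark: your justification of the domain condition $\sum_{i=1}^{n-1}a_i^{1/(1-p)}<1$ via the slack $1-\sum_{i=1}^{n-1}a_i^{1/(1-p)}\ge a_n^{1/(1-p)}$ presupposes the inequality you are in the middle of deriving, and the bound $a_i\ge1$ obtained from the basis vectors only controls each summand, not their sum; it is cleaner to observe that the right-hand side of Lemma \ref{lemma1}(ii) is by definition contained in the domain of $h_p$, so membership in $\bigcap_{S}\Delta_p$ already delivers that condition. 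Parts (ii) and (iii) coincide with the paper's proof.
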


\begin{proof}
Let $\mu_1,\cdots,\mu_n$ be arbitrary positive numbers for which
\begin{eqnarray*}
\|x_1+\cdots+x_n\|^p\leq\frac{\|x_1\|^p}{\mu_1}+\cdots+\frac{\|x_n\|^p}{\mu_n}\
\ (x_1,\cdots,x_n\in X)\,.
\end{eqnarray*}
Putting $x_i=\mu_i^\frac{1}{p-1}x$ (for some $x\neq0$ and for all
$i=1,\cdots,n$) we get
\begin{eqnarray*}
\left\|\mu_1^\frac{1}{p-1}x+\cdots+\mu_n^\frac{1}{p-1}x\right\|^p\leq\frac{\left\|\mu_1^\frac{1}{p-1}x\right\|^p}{\mu_1}+\cdots+\frac{\left\|\mu_n^\frac{1}{p-1}x\right\|^p}{\mu_n}\,,
\end{eqnarray*}
or equivalently we obtain
$\mu_1^\frac{1}{p-1}+\cdots+\mu_n^\frac{1}{p-1}\leq1$ because of
$p>1$.\\
Conversely, if $\mu_1,\cdots,\mu_n>0$ and
$\mu_1^\frac{1}{p-1}+\cdots+\mu_n^\frac{1}{p-1}\leq1$, then the desired inequality is deduced from
following inequalities:
\begin{eqnarray*}
\|x_1+\cdots+x_n\|^p&\leq&(\|x_1\|+\cdots+\|x_n\|)^p\\
&=&\left(\mu_1^\frac{1}{p}\left\|\frac{x_1}{\mu_1^\frac{1}{p}}\right\|+\cdots+\mu_n^\frac{1}{p}\left\|\frac{x_n}{\mu_n^\frac{1}{p}}\right\|\right)^p\\
&\leq&\left(\mu_1^\frac{1}{p-1}+\cdots+\mu_n^\frac{1}{p-1}\right)^{p-1}\left(\frac{\|x_1\|^p}{\mu_1}+\cdots+\frac{\|x_n\|^p}{\mu_n}\right)\\
&\leq&\frac{\|x_1\|^p}{\mu_1}+\cdots+\frac{\|x_n\|^p}{\mu_n}\,.
\end{eqnarray*}
We note that the second inequality follows from the well-known
H\"older inequality.\\
(ii) Let $(\mu_1,\cdots,\mu_n)\in F(p\ ;k)$ for
some $k=1,\cdots,n$. There exists $1\leq j\leq n$ such that
$\mu_j<0$ and inequality \eqref{L6} holds.\\
Putting $x_i=0\ \left(i\in\{1,\cdots,n\}\setminus\{j\}\right)$ and
$x_j\neq0$ in inequality \eqref{L6} we obtain
$\|x_j\|^p\leq\frac{\|x_j\|^p}{\mu_j}$. This is a
contradiction since $\mu_j<0$.\\
(iii) It follows from (i), (ii) and \eqref{L14}.
\end{proof}


\begin{theorem}\label{T3}
Let $(X,\|\cdot\|)$ be a normed space and $0<p\leq1$. Then the following
assertions are valid:\\ {\rm(i)} $F(p\ ;0)=(0,1]\times\cdots\times(0,1]$;\\
{\rm(ii)} $F(p\ ;k)=\varnothing$, for all $k=1,\cdots,n$;\\
{\rm(iii)} $F(p)=F(p\ ;0)\,.$
\end{theorem}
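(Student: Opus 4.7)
The plan is to mirror the structure of Theorem~\ref{T1}, replacing the role of Lemma~\ref{lemma2} with Lemma~\ref{lemma3}. Parts (ii) and (iii) are essentially identical to the corresponding parts of Theorem~\ref{T1}; the only substantive piece is part~(i), and even there the argument is simpler than in the $p>1$ case because subadditivity of $t\mapsto t^p$ on $[0,\infty)$ eliminates the need for H\"older's inequality.

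For part~(i), I would prove necessity by an isolated-coordinate substitution: taking $x_j=0$ for $j\neq i$ and some $x_i\in X$ with $\|x_i\|\neq 0$, inequality~\eqref{L6} collapses to $\|x_i\|^p\leq\|x_i\|^p/\mu_i$, which forces $\mu_i\leq 1$ for each $i=1,\dots,n$. For sufficiency, the direct route uses the elementary inequality $(a+b)^p\leq a^p+b^p$ valid for non-negative $a,b$ and $0<p\leq 1$; iterating this with the triangle inequality, and then using $1/\mu_i\geq 1$, one obtains
$$\|x_1+\cdots+x_n\|^p\leq(\|x_1\|+\cdots+\|x_n\|)^p\leq\sum_{i=1}^n\|x_i\|^p\leq\sum_{i=1}^n\frac{\|x_i\|^p}{\mu_i}.$$
An equivalent route, closer in spirit to the proof of Theorem~\ref{T1}, is to set $a_i:=1/\mu_i$ and take $\Omega$ to be the set of normalized tuples $\bigl(\|x_1\|/\|\sum_j x_j\|,\ldots,\|x_n\|/\|\sum_j x_j\|\bigr)$ ranging over all $x_1,\dots,x_n\in X$ with $\sum_j x_j\neq 0$. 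The triangle inequality gives $\Omega\subseteq\{s:\sum s_i\geq 1\}$, while choosing $x_k$ a unit vector and the others zero shows $e_k\in\Omega$ for each $k$. Hence Lemma~\ref{lemma3}(ii) applies and yields $a_i\geq 1$, that is, $\mu_i\leq 1$.

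For part~(ii), the substitution used in Theorem~\ref{T1}(ii) transfers verbatim: if $\mu_j<0$ for some $1\leq j\leq n$, taking $x_i=0$ for $i\neq j$ and any $x_j\neq 0$ reduces \eqref{L6} to the impossible inequality $\|x_j\|^p\leq\|x_j\|^p/\mu_j$, whose right-hand side is negative. Part~(iii) is then immediate from the decomposition \eqref{L14} once parts (i) and (ii) are in place.

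The main ``obstacle'' is really only a stylistic choice in (i): the subadditivity argument is short and self-contained, whereas the Lemma~\ref{lemma3} route costs nothing extra and places this theorem on a uniform footing with Theorem~\ref{T1}, making the role of the auxiliary lemmas in the paper transparent. I would present the direct subadditivity proof for brevity, and note that Lemma~\ref{lemma3} recovers the same conclusion.
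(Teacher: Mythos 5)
Your proposal is correct, and parts (ii) and (iii) coincide with what the paper does. The only real divergence is in part (i), where your primary argument is genuinely more elementary than the paper's: the paper does not separate necessity from sufficiency but instead rewrites \eqref{L6} as $\sum_{i=1}^n\|x_i\|^p/(\mu_i\|x_1+\cdots+x_n\|^p)\geq 1$, introduces the normalized set $\Omega$ exactly as in your ``alternative route,'' checks $\{e_1,\dots,e_n\}\subseteq\Omega\subseteq\overline{\Omega}$, and reads off both directions simultaneously from the identification $D_p(\Omega)=\{(a_1,\dots,a_n)\colon a_i\geq 1\ \text{for all}\ i\}$ furnished by Lemma~\ref{lemma3}. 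Your direct version --- necessity from the isolated-coordinate substitution $x_j=0$ $(j\neq i)$, which for $\mu_i>0$ yields $\mu_i\leq 1$, and sufficiency from the subadditivity $(a+b)^p\leq a^p+b^p$ on $[0,\infty)$ for $0<p\leq 1$ combined with $1/\mu_i\geq 1$ --- is shorter, self-contained, and bypasses the lemma entirely; note that the subadditivity step is in essence the content of Case~1 in the proof of Lemma~\ref{lemma3}(i), so you are inlining the only part of the lemma that is actually needed here. What the paper's route buys is uniformity: the same $\Omega$-and-$D_p(\Omega)$ template is reused in Theorems~\ref{T2} and~\ref{T4}, where the envelope machinery is indispensable. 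Both arguments are valid, and your closing observation that they yield the same conclusion is accurate.
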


\begin{proof}
(i) Let $\mu_1,\cdots,\mu_n$ be arbitrary positive numbers. We
observe that inequality \eqref{L6} holds if and only if
\begin{eqnarray}\label{I1}
\sum_{i=1}^n\frac{\|x_i\|^p}{\mu_i\|x_1+\cdots+x_n\|^p}\geq1
\end{eqnarray}
for all $x_1,\cdots,x_n\in X$ for which $\sum_{i=1}^nx_i\neq0$.
Putting
$$\Omega=\left\{\left(\frac{\|x_1\|}{\|\sum_{i=1}^nx_i\|},\cdots,\frac{\|x_n\|}{\|\sum_{i=1}^nx_i\|}\right)\colon
x_1,\cdots,x_n\in X,\ \sum_{i=1}^nx_i\neq0 \right\}$$ we get
$\Omega\subseteq\big\{(s_1,\cdots,s_n)\colon s_1,\cdots,s_n\geq0,\
\sum_{i=1}^ns_i\geq1\big\}$,
$\{e_1,\cdots,e_n\}\subseteq\Omega\subseteq\overline{\Omega}$ and inequality
\eqref{I1} turns into $\sum_{i=1}^n\frac{s_i^p}{\mu_i}\geq1$ for all
$(s_1,\cdots,s_n)\in\Omega$, or equivalently
$(\frac{1}{\mu_1},\cdots,\frac{1}{\mu_n})\in D_p(\Omega)$. From
Lemma \ref{lemma3} we deduce that $\mu_i\leq1$ for all
$i=1,\cdots,n$.\\
(ii) It is similar to the proof of Theorem \ref{T1} (ii).\\
(iii) It follows from (i), (ii) and \eqref{L14}.
\end{proof}

Now we want to characterize $G(p)$ for any $p>0$. The next theorem
deals with the case where $p>1$.


\begin{theorem}\label{T2}
Let $(X,\|\cdot\|)$ be a normed space and $p>1$. Then the following hold:\\
{\rm(i)} $G(p\ ;k)=\varnothing$, for all $k=0,\cdots,n-2$;\\
{\rm(ii)} $G(p\ ;n-1)\\=\left\{(\mu_1,\cdots,\mu_n)\colon\exists\
j=1,\cdots,n\ ; \mu_j>0,\ \mu_i<0\ (i\neq j)\ and\ \mu_j^\frac{1}{p-1}\geq1+\sum_{i=1,i\neq j}^n|\mu_i|^\frac{1}{p-1}\right\};$\\
{\rm(iii)} $G(p\ ;n)=\mathbb{R}^-\times\cdots\times\mathbb{R}^-$, where $\mathbb{R}^-=\{t\in\mathbb{R}: t <0\};$\\
{\rm(iv)} $G(p)=G(p\ ;n-1)\cup G(p\ ;n)\,.$

\end{theorem}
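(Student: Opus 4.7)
The plan is to handle the four parts in the order (iii), (i), (ii), with (iv) then immediate from \eqref{L14.5}; I may assume $X\neq\{0\}$, for otherwise every claim is vacuous. For (iii), if every $\mu_i<0$ then $\|x_i\|^p/\mu_i\le 0$ for each $i$, so $\sum_i\|x_i\|^p/\mu_i\le 0\le\|\sum_i x_i\|^p$, giving $(\mathbb{R}^-)^n\subseteq G(p\,;n)$, while the reverse inclusion is by the definition of $G(p\,;n)$. For (i), suppose $(\mu_1,\dots,\mu_n)\in G(p\,;k)$ with $0\le k\le n-2$, so at least two coordinates are positive; call them $\mu_j,\mu_l$. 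Picking $x\in X\setminus\{0\}$ and testing \eqref{L6.5} with $x_j=x$, $x_l=-x$, and all other $x_i=0$ forces $0\ge\|x\|^p(1/\mu_j+1/\mu_l)>0$, a contradiction, so $G(p\,;k)$ is empty.

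The bulk of the work is part (ii). By permuting indices I may suppose $\mu_1>0$ and $\mu_2,\dots,\mu_n<0$. Rewriting \eqref{L6.5} using $-1/\mu_i=1/|\mu_i|$ for $i\ge 2$ yields the equivalent form
$$\frac{\|x_1\|^p}{\mu_1}\le\left\|\sum_{i=1}^n x_i\right\|^p+\sum_{i=2}^n\frac{\|x_i\|^p}{|\mu_i|}\qquad(x_1,\dots,x_n\in X).$$
For necessity I fix non-negative reals $s,s_2,\dots,s_n$, pick a unit vector $u\in X$, and plug in $x_i=s_i u$ for $i\ge 2$ together with $x_1=-(s+s_2+\cdots+s_n)u$; this collinear configuration saturates the triangle bound $\|x_1\|\le\|\sum_i x_i\|+\sum_{i\ge 2}\|x_i\|$ and produces $\|\sum_i x_i\|=s$. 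What remains is the scalar inequality
$$(s+s_2+\cdots+s_n)^p\le\mu_1 s^p+\sum_{i=2}^n\frac{\mu_1}{|\mu_i|}s_i^p,$$
which, read with $X=\mathbb{R}$, is precisely \eqref{L6} for the positive tuple $(1/\mu_1,|\mu_2|/\mu_1,\dots,|\mu_n|/\mu_1)$. Theorem \ref{T1}(i) then gives $(1/\mu_1)^{1/(p-1)}+\sum_{i=2}^n(|\mu_i|/\mu_1)^{1/(p-1)}\le 1$, which on clearing $\mu_1$ becomes the advertised $\mu_1^{1/(p-1)}\ge 1+\sum_{i=2}^n|\mu_i|^{1/(p-1)}$.

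For sufficiency I reverse the chain: under the displayed condition, Theorem \ref{T1}(i) delivers the scalar inequality above for all $s,s_2,\dots,s_n\ge 0$; applying it at $s=\|\sum_i x_i\|$ and $s_i=\|x_i\|$ for $i\ge 2$ and combining with the general triangle bound for $\|x_1\|$ yields the rewritten form of \eqref{L6.5}. Part (iv) then follows at once from (i)--(iii) and \eqref{L14.5}. The one step requiring genuine care is the reduction in (ii): one must identify the collinear configuration that saturates the triangle inequality and recognise the resulting scalar inequality as a bona fide instance of \eqref{L6}, so that Theorem \ref{T1}(i) can be invoked verbatim in both directions; the remaining pieces are structural sign-testing or routine bookkeeping.
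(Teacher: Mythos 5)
Your proposal is correct, and for the substantive part (ii) it takes a genuinely different route from the paper. The paper's parts (i), (iii), (iv) are handled exactly as you do (two positive coordinates killed by an antipodal pair $x_{j_1}+x_{j_2}=0$; all-negative tuples trivially admissible; the union formula \eqref{L14.5}). For (ii), however, the paper normalizes \eqref{L6.5} by $\|x_j\|^p$, packages the resulting constraints as membership of $\bigl(\mu_j/|\mu_1|,\dots,\mu_j/|\mu_n|,\mu_j\bigr)$ in the set $D_p(\Omega)$ of Lemma \ref{lemma2}, and then invokes the envelope computation of Lemma \ref{lemma1} (the surface $a_n=h_p(a_1,\dots,a_{n-1})$) to extract the condition $\mu_j^{1/(p-1)}\geq 1+\sum_{i\neq j}|\mu_i|^{1/(p-1)}$. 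You instead isolate $\|x_1\|^p/\mu_1$ on one side, observe that the collinear configuration $x_1=-(s+s_2+\cdots+s_n)u$, $x_i=s_iu$ saturates the triangle bound and realizes every nonnegative scalar tuple, and thereby recognize the resulting scalar inequality as an instance of \eqref{L6} over $X=\mathbb{R}$ for the positive tuple $(1/\mu_1,|\mu_2|/\mu_1,\dots,|\mu_n|/\mu_1)$; Theorem \ref{T1}(i) (whose sufficiency half rests only on H\"older) then runs in both directions. Your reduction buys a proof that bypasses the envelope machinery of Lemmas \ref{lemma1} and \ref{lemma2} entirely and exposes the duality between the direct inequality \eqref{L6} and its reverse \eqref{L6.5}; the paper's approach buys a uniform framework ($D_p(\Omega)$ for various $\Omega$) that it reuses verbatim for the case $0<p\leq 1$ in Theorem \ref{T4}. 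The only hypothesis you need beyond the paper's is $X\neq\{0\}$ (to have a unit vector), which the paper also uses implicitly in its proof of (i), so nothing is lost.
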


\begin{proof}
(i) Let $(\mu_1,\cdots,\mu_n)\in G(p\ ;k)$ for some
$k=0,\cdots,n-2$. Thus there exist $j_1,j_2\in\{1,\cdots,n\}$ such
that $\mu_{j_1},\mu_{j_2}>0\ (j_1\neq j_2)$ and also inequality
\eqref{L6.5} holds. Setting $x_i=0\
(i\in\{1,\cdots,n\}\setminus\{j_1,j_2\})$ in inequality \eqref{L6.5}
we have
\begin{eqnarray}\label{L15}
\|x_{j_1}+x_{j_2}\|^p\geq\frac{\|x_{j_1}\|^p}{\mu_{j_1}}+\frac{\|x_{j_2}\|^p}{\mu_{j_2}}\,.
\end{eqnarray}
Also we can consider $x_{j_1},x_{j_2}\in X$ for which
$x_{j_1}+x_{j_2}=0$ and $(x_{j_1},x_{j_2})\neq(0,0)$. Thus
inequality \eqref{L15} implies that
$\frac{\|x_{j_1}\|^p}{\mu_{j_1}}+\frac{\|x_{j_2}\|^p}{\mu_{j_2}}\leq0$, which is a contradiction.\\
(ii) Let $(\mu_1,\cdots,\mu_n)\in\mathbb{R}^n$ satisfy $\mu_j>0$ for
some $j=1,\cdots,n$ and $\mu_i<0$ for all
$i\in\{1,\cdots,n\}\setminus\{j\}$\,.\\
We observe that inequality \eqref{L6.5} holds if and only if
\begin{eqnarray}\label{L9}
\sum_{i=1,i\neq
j}^n\frac{\mu_j\|x_i\|^p}{|\mu_i|\|x_j\|^p}+\frac{\mu_j\|x_1+\cdots+x_n\|^p}{\|x_j\|^p}\geq1
\end{eqnarray}
for all $x_1,\cdots,x_n\in X, x_j\neq0$.\\
Putting
 $\Omega=\left\{\left(\frac{\|x_1\|}{\|x_j\|},\cdots,\frac{\|x_{j-1}\|}{\|x_j\|},\frac{\|x_{j+1}\|}{\|x_j\|},\cdots,\frac{\|x_n\|}{\|x_j\|},\frac{\|x_1+\cdots+x_n\|}{\|x_j\|}\right)\in\mathbb{R}^n\colon
x_1,\cdots,x_n\in X, x_j\neq0\right\}$\\we have
$\Omega\subseteq\big\{(s_1,\cdots,s_n)\colon s_1,\cdots,s_n\geq0,\
\sum_{i=1}^ns_i\geq1\big\}$, $S\subseteq\Omega\subseteq\overline{\Omega}$ and
inequality \eqref{L9} turns into
$$\frac{\mu_j}{|\mu_1|}s_1^p+\cdots+\frac{\mu_j}{|\mu_{j-1}|}s_{j-1}^p+\frac{\mu_j}{|\mu_{j+1}|}s_j^p+\cdots+\frac{\mu_j}{|\mu_n|}s_{n-1}^p+\mu_js_n^p\geq1$$
for all $(s_1,\cdots,s_n)\in\Omega$, or equivalently
\begin{eqnarray}\label{L10}
\left(\frac{\mu_j}{|\mu_{1}|},\cdots,\frac{\mu_j}{|\mu_{j-1}|},\frac{\mu_j}{|\mu_{j+1}|},\cdots,\frac{\mu_j}{|\mu_{n}|},\mu_j\right)\in
D_p(\Omega)\,.
\end{eqnarray}
From Lemma \ref{lemma2} and \eqref{L10} we deduce that
$$\mu_j\geq
h_p\left(\frac{\mu_j}{|\mu_{1}|},\cdots,\frac{\mu_j}{|\mu_{j-1}|},\frac{\mu_j}{|\mu_{j+1}|},\cdots,\frac{\mu_j}{|\mu_{n}|}\right)\,,$$
that is
\begin{eqnarray}\label{L11}
\mu_j\geq\left(1-\sum_{i=1,i\neq
j}^n\frac{|\mu_i|^\frac{1}{p-1}}{\mu_j^\frac{1}{p-1}}\right)^{1-p}\,.
\end{eqnarray}
By a straightforward calculation the following inequality follows
from \eqref{L11}.
$$\mu_j^\frac{1}{p-1}\geq1+\sum_{i=1,i\neq
j}^n|\mu_i|^\frac{1}{p-1}\,.$$ (iii) It is trivial.\\
(iv) It easily follows from (i), (ii), (iii) and \eqref{L14.5}.
\end{proof}


\begin{theorem}\label{T4}
Let $(X,\|\cdot\|)$ be a normed space and $0<p\leq1$. Then the following
hold.\\
{\rm(i)} $G(p\ ;k)=\varnothing$, for all $k=0,\cdots,n-2$;\\
{\rm(ii)} $G(p\ ;n-1)=\\
\left\{(\mu_1,\cdots,\mu_n)\colon\exists\ j=1,\cdots,n;\ \mu_j>0,\ \mu_i<0\ (i\neq j)\ and\ \mu_j\geq \displaystyle{\max_{i\in\{1,\cdots,n\}\setminus\{j\}}}\{1,|\mu_i|\}\right\};$\\
{\rm(iii)} $G(p\ ;n)=\mathbb{R}^-\times\cdots\times\mathbb{R}^-;$\\
{\rm(iv)} $G(p)=G(p\ ;n-1)\cup G(p\ ;n)\,.$
\end{theorem}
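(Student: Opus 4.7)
The plan is to run the proof of Theorem \ref{T2} almost verbatim, but to replace Lemma \ref{lemma2} (valid only for $p>1$) by its $0<p\leq1$ counterpart Lemma \ref{lemma3}. The upshot of this substitution is that the envelope description $a_n\geq h_p(a_1,\dots,a_{n-1})$ of $D_p(\Omega)$ collapses to the coordinatewise description $a_1,\dots,a_n\geq 1$, which is exactly why the characterization in (ii) has the much simpler form $\mu_j\geq\max_{i\neq j}\{1,|\mu_i|\}$ instead of the Hölder-type constraint that appears in Theorem \ref{T2}.

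Items (i), (iii) and (iv) carry over with essentially no change. For (i), if $(\mu_1,\dots,\mu_n)\in G(p\,;k)$ with $0\leq k\leq n-2$, then two entries $\mu_{j_1},\mu_{j_2}$ are positive; specializing \eqref{L6.5} by setting $x_i=0$ for $i\notin\{j_1,j_2\}$ and then choosing $x_{j_1}+x_{j_2}=0$ with $(x_{j_1},x_{j_2})\neq(0,0)$ produces the same contradiction as in Theorem \ref{T2}(i). Item (iii) is trivial because all $\mu_i<0$ forces the right-hand side of \eqref{L6.5} to be non-positive while the left-hand side is non-negative. Item (iv) is just \eqref{L14.5}.

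The substantive content is (ii). I would fix $j$ with $\mu_j>0$ and $\mu_i<0$ for $i\neq j$. The case $x_j=0$ is immediate (the right-hand side of \eqref{L6.5} is then non-positive), so for $x_j\neq 0$ I would divide through by $\|x_j\|^p$ and multiply by $\mu_j$, exactly as in Theorem \ref{T2}(ii), obtaining the equivalent inequality \eqref{L9}. Using the same $\Omega$ as in that proof, one checks via the triangle inequality that $\Omega\subseteq\{(s_1,\dots,s_n)\colon s_i\geq 0,\ \sum_{i} s_i\geq 1\}$ and that $S\subseteq\Omega\subseteq\overline{\Omega}$. In particular $\{e_1,\dots,e_n\}\subseteq\overline{\Omega}$, so Lemma \ref{lemma3}(ii) yields
\[
D_p(\Omega)=\{(a_1,\dots,a_n)\colon a_1,\dots,a_n\geq 1\}.
\]
The membership statement analogous to \eqref{L10} therefore reduces to $\mu_j/|\mu_i|\geq 1$ for each $i\neq j$ together with $\mu_j\geq 1$, i.e.\ $\mu_j\geq\max_{i\neq j}\{1,|\mu_i|\}$, which is the stated conclusion.

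The only non-routine point is checking that the geometric set-up transfers unchanged from the $p>1$ regime: namely, that the same $\Omega$ satisfies $S\subseteq\overline{\Omega}$ so that Lemma \ref{lemma3}(ii) (not only (i)) can be invoked. That verification is independent of $p$ and was already carried out in the proof of Theorem \ref{T2}(ii); once it is in hand, the swap of Lemma \ref{lemma2} for Lemma \ref{lemma3} finishes the theorem with no further work.
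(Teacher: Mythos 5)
Your proposal is correct and follows essentially the same route as the paper: the authors likewise reuse the set-up of Theorem \ref{T2} (the same $\Omega$, the observation that $\{e_1,\dots,e_n\}\subseteq\Omega\subseteq\overline{\Omega}$, and the equivalence of \eqref{L6.5} with membership in $D_p(\Omega)$), then invoke Lemma \ref{lemma3} in place of Lemma \ref{lemma2} to obtain $\mu_j\geq 1$ and $\mu_j\geq|\mu_i|$ for $i\neq j$. Parts (i), (iii) and (iv) are handled in the paper exactly as you describe.
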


\begin{proof}
(i) It is similar to the proof of Theorem \ref{T2} (i).\\
(ii) Let $(\mu_1,\cdots,\mu_n)\in\mathbb{R}^n$ satisfying $\mu_j>0$
for some $j=1,\cdots,n$ and $\mu_i<0$ for all
$i\in\{1,\cdots,n\}\setminus\{j\}$.\\
Putting
$\Omega=\left\{\left(\frac{\|x_1\|}{\|x_j\|},\cdots,\frac{\|x_{j-1}\|}{\|x_j\|},\frac{\|x_{j+1}\|}{\|x_j\|},\cdots,\frac{\|x_n\|}{\|x_j\|},\frac{\|x_1+\cdots+x_n\|}{\|x_j\|}\right)\in\mathbb{R}^n\colon
x_1,\cdots,x_n\in X, x_j\neq0\right\}$\\we have
$\Omega\subseteq\left\{(s_1,\cdots,s_n)\colon s_1,\cdots,s_n\geq0,\
\sum_{i=1}^ns_i\geq1\right\}$ and
$\{e_1,\cdots,e_n\}\subseteq\Omega\subseteq\overline{\Omega}$. Passing the proof of
Theorem \ref{T2} (ii) we observe that
$\left\|\sum_{i=1}^nx_i\right\|^p\geq\sum_{i=1}^{n}\frac{\|x_i\|^p}{\mu_i}$ for
all $x_1,\cdots,x_n\in X$ if and only if
$$\left(\frac{\mu_j}{|\mu_{1}|},\cdots,\frac{\mu_j}{|\mu_{j-1}|},\frac{\mu_j}{|\mu_{j+1}|},\cdots,\frac{\mu_j}{|\mu_{n}|},\mu_j\right)\in
D_p(\Omega)\,.$$
From Lemma \ref{lemma3} we deduce that
$$\mu_j\geq1\ \ \  {\rm and}\ \ \mu_j\geq|\mu_i|\ \ (i\in\{1,\cdots,n\}\setminus\{j\}).$$
or equivalently we get $\mu_j\geq
\displaystyle{\max_{i\in\{1,\cdots,n\}\setminus\{j\}}}\{1,|\mu_i|\}$.\\
(iii) It is trivial.\\
(iv) It follows from (i), (ii), (iii) and \eqref{L14.5}.
\end{proof}


\begin{corollary}
Suppose that $(X,\|\cdot\|)$ is a normed space and $p>0$. Let
$$H(p):=\left\{(\mu_1,\cdots,\mu_n)\in\mathbb{R}^{n}\colon\ \left\|\sum_{i=1}^nx_i\right\|^p
\leq\left|\sum_{i=1}^{n}\frac{\|x_i\|^p}{\mu_i}\right|\ {\rm for\
all}\ x_1,\cdots,x_n\in X\right\}\,.$$ Then the following assertions hold:\\
{\rm(i)} If $p>1$, then
\begin{eqnarray*}
H(p)&=&F(p)\cap-F(P)\\&=&\left\{(\mu_1,\cdots,\mu_n)\colon \ \mbox{either}\ \mu_1,\cdots,\mu_n>0 \ \mbox{or}\
\mu_1,\cdots,\mu_n<0\ \mbox{as well as}\
\sum_{i=1}^n|\mu_i|^{\frac{1}{p-1}}\leq 1\right\}\,;
\end{eqnarray*}
{\rm(ii)} If $0<p\leq1$, then $H(p)=(0,1]^n\cup[-1,0)^n$.
\end{corollary}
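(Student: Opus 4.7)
The plan is to first show that for any $(\mu_1,\ldots,\mu_n)\in H(p)$ the coordinates $\mu_i$ must all be nonzero and of a single common sign, and then to reduce each of the two resulting cases to the characterization of $F(p)$ already established in Theorems~\ref{T1} and~\ref{T3}.

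First I would rule out mixed signs. Suppose for contradiction that some $(\mu_1,\ldots,\mu_n)\in H(p)$ has $\mu_i>0$ and $\mu_j<0$ for some $i\ne j$. Fix any nonzero $u\in X$; setting $x_k=0$ for $k\notin\{i,j\}$ and choosing $x_i=\alpha u$, $x_j=\beta u$ with $\alpha,\beta>0$ satisfying $\alpha^p/\mu_i+\beta^p/\mu_j=0$ (possible because $\mu_i$ and $\mu_j$ have opposite signs), one obtains $|\sum_{k=1}^n\|x_k\|^p/\mu_k|=0$ while $\|\sum_{k=1}^nx_k\|^p=(\alpha+\beta)^p\|u\|^p>0$, contradicting the defining inequality of $H(p)$. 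A one-variable test (all $x_k=0$ except one) forces each $\mu_i\ne 0$ as well, since $\|x_i\|^p\le \|x_i\|^p/|\mu_i|$ requires $|\mu_i|\le 1$ and implicitly $\mu_i\ne 0$.

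Once all $\mu_i$ share one sign, the absolute value becomes redundant. If all $\mu_i>0$, then $\sum_{i=1}^n\|x_i\|^p/\mu_i\ge 0$ for every choice of $x_1,\ldots,x_n$, so the defining inequality of $H(p)$ is identical to the one defining $F(p)$; hence this part of $H(p)$ equals $F(p)$. If all $\mu_i<0$, then $\sum_{i=1}^n\|x_i\|^p/\mu_i\le 0$, so
$$\left|\sum_{i=1}^n\frac{\|x_i\|^p}{\mu_i}\right|=\sum_{i=1}^n\frac{\|x_i\|^p}{|\mu_i|},$$
and membership in $H(p)$ is equivalent to $(|\mu_1|,\ldots,|\mu_n|)\in F(p)$, i.e.\ $(\mu_1,\ldots,\mu_n)\in -F(p)$. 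Combining the two cases gives $H(p)=F(p)\cup(-F(p))$.

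Finally I would substitute the explicit descriptions. For $p>1$, Theorem~\ref{T1} yields $F(p)=\{\mu:\mu_i>0,\ \sum_{i=1}^n\mu_i^{1/(p-1)}\le 1\}$, so $-F(p)$ is the same set with all $\mu_i<0$ and $|\mu_i|^{1/(p-1)}$ in place of $\mu_i^{1/(p-1)}$, producing exactly the set in (i). For $0<p\le 1$, Theorem~\ref{T3} gives $F(p)=(0,1]^n$ and therefore $-F(p)=[-1,0)^n$, yielding (ii). The only genuinely delicate step is the mixed-sign obstruction; the key idea there is to take $x_i,x_j$ collinear (on a single ray through the origin) rather than cancelling, so that $\|x_1+\cdots+x_n\|$ stays strictly positive while the right-hand side is forced to vanish by balancing $\alpha^p/\mu_i$ against $\beta^p/\mu_j$.
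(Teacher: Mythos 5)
Your proof is correct, but it takes a different route from the paper. The paper's proof is a two-line reduction: it asserts the set identity $H(p)=(F(p)\cap -G(p))\cup(-F(p)\cap G(p))$ and then reads off the answer from all four structure theorems (Theorems \ref{T1}--\ref{T4}), i.e.\ it uses the characterizations of both $F(p)$ and $G(p)$. You instead prove directly that any $(\mu_1,\ldots,\mu_n)\in H(p)$ must have all coordinates of one sign --- via the collinear test vectors $x_i=\alpha u$, $x_j=\beta u$ with $\alpha^p/\mu_i+\beta^p/\mu_j=0$, which kills the right-hand side while keeping the left-hand side positive --- and then observe that once the sign is constant the absolute value is redundant, giving $H(p)=F(p)\cup(-F(p))$ and requiring only Theorems \ref{T1} and \ref{T3}. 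Your route is more elementary and arguably more complete: the paper's ``one can easily observe'' step hides exactly the issue you address, namely why the disjunction $\sum_i\|x_i\|^p/\mu_i\geq\|\sum_i x_i\|^p$ or $\leq-\|\sum_i x_i\|^p$ holds with the \emph{same} branch for every choice of $x_1,\ldots,x_n$; your mixed-sign obstruction is the missing uniformity argument. What the paper's formulation buys is only the symmetry of writing everything through $\pm F$ and $\pm G$. One editorial note: the first displayed equality in the corollary, $H(p)=F(p)\cap -F(p)$, is evidently a typographical error for $F(p)\cup(-F(p))$ (the intersection would be empty for $p>1$); your conclusion agrees with the explicit set description given in (i) and (ii) and with what the paper's own decomposition actually yields.
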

\begin{proof}
One can easily observe that
$$H(p)=(F(p)\cap-G(p))\cup(-F(p)\cap G(p)).$$
Now (i) follows from Theorems \ref{T1} and \ref{T2} and (ii) follows
from Theorems \ref{T3} and \ref{T4}.
\end{proof}

The special cases of our results in the case where $n=2$ and $p \geq 1$ give rise to
some of main results of Takahasi et al \cite [Theorems 1.1 and
4.1]{Takahasi-Ras-Sai-Takahashi}.

\bibliographystyle{amsplain}

\end{document}